\newtheorem{theorem}{Theorem}[section]
\newtheorem{lemma}[theorem]{Lemma}
\numberwithin{equation}{section}
\newcommand{\ssection}[1]{%
  \section[#1]{\normalsize\bf #1}}
\begin{document}

\title {\large\sc Uniform error estimates for Navier-Stokes flow\\
with an exact moving boundary\\ using the immersed interface method
 }

\author{{\normalsize J. Thomas Beale} \\
{\normalsize{\em Department of Mathematics, Duke University, Box 90320,
Durham,}}\\
{\normalsize{\em North Carolina 27708, U.S.A.}}\\
{\normalsize{\tt beale@math.duke.edu}}}

\date{}

\maketitle

{\footnotetext {
This work was supported in part by National Science Foundation
grant DMS-1312654.  } }

%\footnotetext{The author was supported by NSF Grant DMS-0102356.\\
%{\it Subject Classification:} 65R20, 65D30, 31B10, 35J25}

\begin{abstract}
We prove that uniform accuracy of almost second order can be achieved
with a finite difference method applied to Navier-Stokes flow at low
Reynolds number with a moving boundary, or interface, creating jumps in the velocity
gradient and pressure.  Difference operators are corrected to $O(h)$
near the interface using the immersed interface method, adding terms
related to the jumps,
on a regular grid with spacing $h$ and periodic boundary conditions.
The force at the interface is assumed known within an error tolerance;
errors in the interface location are not taken into account.  The 
error in velocity is shown to be uniformly $O(h^2|\log{h}|^2)$, even
at grid points near the interface, and, up to a constant, the pressure has error
$O(h^2|\log{h}|^3)$.  The proof uses estimates for finite difference
versions of Poisson and diffusion equations which exhibit a gain in 
regularity in maximum norm.
\end{abstract}   

\newcommand{\beq}{\begin{equation}}
\newcommand{\eeq}{\end{equation}}
\newcommand\p{\,+\,}
\newcommand\lee{\,\leq\,}
\newcommand\gee{\,\geq\,}
\newcommand\m{\,-\,}
\newcommand\eq{\,=\,}
\newcommand{\eps}{\varepsilon}
\newcommand{\sig}{\sigma}
\newcommand{\pa}{\partial}
\newcommand{\lilhalf}{{\textstyle \frac12}}
\newcommand{\lilth}{{\textstyle \frac32}}
\newcommand{\mm} {_{\max}}
\newcommand{\np}{{n+1}}
\newcommand{\nm}{{n-1}}
\newcommand{\nhalf}{{n+1/2}}
\newcommand{\ex}{{ex}}
\newcommand{\NN}{{\mathcal N}}
\newcommand{\R}{{\mathbb R}}
\newcommand{\Z}{{\mathbb Z}}
\newcommand{\laph}{\Delta_h}
\newcommand{\Ptw}{{\tilde P}}
\newcommand{\fcns}{{\mathcal F}(\Omega_h)}
\newcommand{\phihat}{{\hat \varphi}}
\newcommand{\fhat}{{\hat f}}
\newcommand{\ghat}{{\hat g}}

\ssection{Introduction}
Recently there has been enormous development in numerical methods for
fluid flow with moving boundaries or fluid-structure interaction.  Often finite
difference methods are used on a Cartesian grid which does not conform to the
moving boundary.  A separate representation is used for the boundary, and the effect
of the boundary on the fluid must be included.  For biological models 
practical applications have most often used the immersed boundary method \cite{pacta}
in which the force on the fluid from the boundary is spread to nearby grid points.
Other methods maintain a sharp interface and are seen to attain about $O(h^2)$
accuracy in the velocity.  Generally these methods are carefully designed to
control the truncation error, taking into account the location of the
boundary relative to the grid cells.  Here we focus on the immersed interface
method (\cite{khoo,leelev,ll97,libook,lilai,mayo84,xuwang2d,xuwangsys,wiegbube})
in which difference operators for the velocity and pressure
are corrected where the stencil crosses the interface using jumps in the quantities
and their derivatives.  Closely related methods use ghost points or cut cells.
With low to moderate Reynolds number, it is often observed in computations
that the velocity error is about $O(h^2)$ even when the truncation error is
$O(h)$ near the interface, but this phenomenon has not been explained, and
understanding of the solution error has come mainly from numerical evidence.

In this work we estimate the errors in velocity and pressure, uniformly
with respect to grid points, including those near the moving boundary, in a simple
prototype problem using the immersed interface method.  We neglect possible errors
in the boundary location and concentrate on the errors in fluid variables brought
about by the numerical treatment of the force from the moving boundary.
Thus for a problem with coupled motion
of the fluid and moving boundary we are only partially accounting for the errors. 
We verify analytically the
principle that $O(h)$ truncation error at the moving boundary can lead to uniform
accuracy close to $O(h^2)$.  In doing so we elucidate the minimal features necessary
to achieve this accuracy.  This result depends on the effect of diffusion with
implicit time stepping and thus is significant at low Reynolds number.

We will always measure errors in maximum, or $L^\infty$, norm.
One reason is that methods in use are
generally designed to control maximum truncation errors near the moving boundary.  A second reason
is that the errors in the solution are likely to be largest near the boundary, and the
most meaningful measure of the error is a uniform estimate.
We use estimates derived in \cite{bealay,smooth} for discrete Poisson and diffusion
equations with a gain in regularity in maximum norm.
Although we have chosen to study the immersed interface method, we hope that 
the analytic technique introduced here will be suggestive for the larger class of related methods.

We first state the physical problem.
We consider fluid flow in a rectangular region $[-L,L]^d$ in dimension $d = 2$ or $3$,
with velocity $u$ and pressure $p$, both periodic.  We suppose the moving boundary or interface
$\Gamma$ is a closed curve in $\R^2$ or closed surface in $\R^3$.
We assume the density is constant and the Reynolds number is low to moderate, and
for simplicity we set both to $1$.  The fluid flow is determined by the Navier-Stokes
equations for the velocity and pressure, with a force exerted by the interface on the fluid,
\beq  u_t + u\cdot\nabla u + \nabla p = \Delta u + f \delta_\Gamma \,, \quad
          \nabla\cdot u \eq 0  \eeq
where $f$ is the force density, $\delta_\Gamma$ is the delta function
restricting to the surface $\Gamma$, and $\Delta = \nabla^2$ is the Laplacian.
Equivalently, the equation holds away from $\Gamma$
with zero force, and the velocity and pressure have the jumps across $\Gamma$
\beq  [u] \eq 0 \,,  \qquad \left[ \frac{\pa u}{\pa n} \right] 
             \eq - f_{{tan}} \,,\quad
                   f_{{tan}} \,\equiv\,  f - (f\cdot n)n   \eeq
\beq  [p] \eq f\cdot n \,, \qquad \left[ \frac{\pa p}{\pa n} \right] =
          \nabla_\Gamma \cdot f_{{tan}}   \eeq
Here $n$ is the outward unit normal at $\Gamma$, and $[p] = p_+ - p_-$ is the
difference between the outside and inside values at $\Gamma$.  (See e.g.
\cite{khoo,ll97,libook,lilai,pp,xuwangsys}.)

  The operator
$\nabla_\Gamma\cdot\;$ is the surface divergence;
in $\R^2$ it is just the arclength derivative.
(E.g, see \cite{aris}, (9.41.1) for the definition of $\nabla_{\Gamma}\cdot\;$
and \cite{xuwangsys} for a thorough derivation of $[{\pa p/\pa n}]\;$.)
The fact that the pressure $p$ is periodic depends on the
facts that
\beq   [u\cdot\nabla u]\cdot n \eq 0 \quad \mbox{and} 
          \quad \int_\Gamma \left[ \frac{\pa p}{\pa n}\right] \,dS \eq 0 \eeq
At each $t$, the pressure has an indefinite constant; adding $p_0(t)$ to
$p(x,t)$ does not change (1.1) or (1.3).

Typically $\Gamma$ moves with the fluid velocity and the force $f$ is determined from the configuration of $\Gamma$, depending on its
material properties, e.g. elastic forces, so that $f$ and $\Gamma$ depend on the
fluid variables.  In this work we assume the location of $\Gamma$ is known exactly
and $f$ is known within a certain error tolerance; see Theorem 1.1 below.
Thus, for the full problem, we estimate only the part of the error in velocity and
pressure from their direct computation while neglecting the influence of errors in $\Gamma$.
With this qualification, the maximum errors in velocity and pressure in the scheme
studied here are shown to be $O(h^2 |\log{h}|^2)$ and $O(h^2 |\log{h}|^3)$, resp.

We discretize $u$, $p$ and their derivatives on a regular grid at points
$x_j = (j_1,j_2)h$ or $x_j = (j_1,j_2,j_3)h$ with $h = L/N$.  We use the usual centered
differences for the discrete gradient $\nabla_h$, divergence $\nabla_h\cdot\;$ and
Laplacian $\Delta_h$.  All are $O(h^2)$ accurate for smooth functions, and thus at grid
points where the stencil does not cross the interface $\Gamma$.  In the immersed interface
method, the differences are corrected at the irregular points using jumps at $\Gamma$
for the variables and their partial derivatives.
For example, for a function $v(x)$, $x \in \R$, if $x_{j-1}$, $x_j$ are inside $\Gamma$ and
$x_{j+1}$ is outside, $\Gamma$ intersects the grid line
at $x^*$ with $x_j \leq x^* \leq x_{j+1}$, and $h_+ = x_{j+1} - x^*$,    then
\beq  v_x(x_j) \eq \frac{v_{j+1} - v_{j-1}}{2h} - \frac{1}{2h}
       \left( [v]  + h_+[Dv] + \frac{h_+^2}{2}[D^2v] \right) + O(h^2) \eeq
\beq  v_{xx}(x_j) \eq \frac{v_{j-1} - 2v_{j} + v_{j+1}}{h^2} - \frac{1}{h^2}
       \left( [v]  + h_+[Dv] + \frac{h_+^2}{2}[D^2v] \right) + O(h) \eeq
provided $v$ is $C^3$ on each side,
with similar but different formulas at $x_{j+1}$; e.g., see \cite{libook,wiegbube,xuwangsys}.
Jumps in the first and second partial derivatives of $u$ and $p$,
needed for corrections here, can be found from
(1.2), (1.3), as explained in the works cited.

Next we present the scheme to be analyzed.
We choose a time step $\tau = O(h)$ and compute the velocity $u^n$ at time $t_n = n\tau$.
In updating $u^n$ to $u^\np$, we will assume the force $f$ and needed corrections
are known up to time $t_\np$.  (If the interface $\Gamma$ is updated explicitly, the
simplest possibility, then $\Gamma^\np$ is found from $u^n$, and it determines the jumps at
time $t_\np$.)
Assuming $u^0$ is given, we start with
\beq  u^{1} - u^0 \eq - \tau  (u\cdot\nabla u)^{0} - \tau \nabla p^{0} +
  \tau \Delta u^{1/2} + \tau C_1  + \tau C_7 \eeq
Then, with $u$ known up to time $t_n$, $n\geq 1$,
the new velocity $u^\np$ is found from
\beq  u^{n+1} - u^n \eq - \tau  (u\cdot\nabla u)^{n+1/2} - \tau \nabla p^{n+1/2} +
  \tau \Delta u^\nhalf + \tau C_1  + \tau C_7 \eeq
We will describe the discretization of each term.
Here $C_1$ corrects the approximation $u^\nhalf_t \approx (u^\np - u^n)/\tau$
at grid points which are crossed by the interface during the interval
$t_n \leq t \leq t_\np$.  This correction is a term proportional to $[u_t]$.   Since
the velocity is continuous at the interface, the material derivative
$u_t + u\cdot\nabla u$ is also continuous, so that $[u_t] = - u\cdot[\nabla u]$.
(E.g. see \cite{khoo}, (33)-(35) or \cite{xuwangsys}, Cor. 3.2.)
Corrections to other terms at locations crossed by the interface during a time interval
will be included in $C_7$, discussed later.

The advection term is extrapolated in time for $n \geq 1$,
\beq  (u\cdot\nabla u)^{n+1/2} \eq
       \lilth u^n\cdot\nabla_h u^n - \lilhalf u^\nm\cdot\nabla_h u^\nm + C_2  \eeq
where $C_2$ is the correction to the centered difference
$\nabla_h$, determined from the jumps
$[Du]$ and $[D^2 u]$, at each time
$t_n$ and $t_\nm$ as in (1.5).  Similarly
\beq   \Delta u^\nhalf = \lilhalf \left( \Delta_h u^\np + \Delta_h u^n \right) + C_3 \eeq
with corrections $C_3$ to $\Delta_h$ again determined from $[Du], [D^2 u]$ at each time
as in (1.6).

For the pressure term we first compute the divergence of (1.9),
\beq  \nabla\cdot(u\cdot\nabla u)^\nhalf = \nabla_h\cdot (u\cdot\nabla u)^{n+1/2} + C_4 \eeq
where $C_4$ corrects $\nabla_h\cdot\;$ for the jumps in $[Du], [D^2 u]$.
We then solve the discrete Poisson problem
\beq  \Delta_h p^\nhalf \eq - \nabla_h\cdot(u\cdot\nabla u)^{n+1/2} - C_4 + C_5  - m \eeq
with periodic boundary conditions.  Here
$C_5$ corrects $\Delta_h p^\nhalf$ using the jumps $[p], [Dp], [D^2p]$ and $[u\cdot\nabla u]$.
The last term $m$ is the mean value, or average, of $-C_4 + C_5$ on the grid.
It is subtracted so that the right side
above has mean value zero and thus is in the range of $\Delta_h$; we will see that the
error resulting from $m$ is not significant.  The periodic solution of (1.12) has an
indefinite constant; we choose it to have mean value zero. Finally,
\beq  \nabla p^{n+1/2} = \nabla_h p^\nhalf + C_6  \eeq
where $C_6$ corrects $\nabla_h$ using $[p], [Dp], [D^2p]$.

For each of the terms $(u\cdot\nabla u)^{n+1/2}$, $\nabla p^{n+1/2}$, $\Delta u^\nhalf$ we
compute separately at two time levels, including corrections.  At a grid point crossed by the interface during the time interval we add a correction proportional to the jump in each quantity; see (14),(15) in \cite{khoo}.  These terms form $C_7$. 

We will show that this scheme produces values of $u$ and $p$ that have accuracy slightly
less than $O(h^2)$ with certain assumptions.  We summarize the conclusion as follows.

\begin{theorem} Suppose the exact solution of (1.1)-(1.3) is smooth in space-time
on each side of the interface $\Gamma$ for $0 \leq t \leq T$, and also $\Gamma$ is smooth.
We neglect any errors in $\Gamma$, and we assume that
$f$ and $\nabla_{tan} f$ are known within maximum error $O(h^2)$.
Then, with $\tau/h$ constant and $h$ sufficiently small,
\beq  \max_{j,n} { \left| u^{computed}(x_j,t_n) - u^{exact}(x_j,t_n)\right| }
                \,\leq\, K_T h^2 |\log{h}|^2 \eeq
%\beq  \max_{i,n} { \left| u^{\mbox{computed}}(x_i,t_n) - u^{\mbox{exact}}(x_i,t_n)\right| }
%                \leq K_T h^2 |\log{h}|^2 \eeq
for $t_n = n\tau \leq T$ and some constant $K_T$ independent of $h$.  The pressure $p^n$
can be found as above at time $t_n$ so that, for some constant $p_0^n$, depending on $h$,
$p^n + p_0^n$ differs from the exact pressure with maximum error
bounded by $K_T h^2 |\log{h}|^3$.
\end{theorem}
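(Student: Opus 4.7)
The plan is to substitute the exact solution into the scheme (1.7)--(1.13), subtract from the computed scheme, and study the resulting discrete evolution equation for the velocity error $e_u^n = u^n - u^{\mathrm{exact}}(t_n)$ together with a pressure error $e_p^\nhalf = p^\nhalf - p^{\mathrm{exact}}(t_\nhalf) - p_0^n$. The residual decomposes into: (i) a Crank--Nicolson/Adams--Bashforth time-discretization error that is $O(h^2)$ uniformly, since $\tau = O(h)$ and $u,p$ are smooth on each side of $\Gamma$ up to time $T$; (ii) a spatial truncation error that is $O(h^2)$ at regular grid points but only $O(h)$ at the $O(h^{-(d-1)})$ irregular points adjacent to $\Gamma$, coming from (1.5)--(1.6) and the analogous corrections $C_2, C_3, C_4, C_5, C_6$; (iii) an $O(h^2)$ contribution from the inexact data $f,\nabla_{tan} f$ propagating into the jump values via (1.2)--(1.3); and (iv) errors from the mean-value subtraction $m$ and from the $C_7$ sweep corrections at points crossed by $\Gamma$ during $[t_n,t_\np]$, each again concentrated near $\Gamma$.

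The central mechanism is the gain-in-regularity estimate of \cite{bealay,smooth} for discrete Poisson and diffusion problems on a periodic grid. The prototype statement is that if $g$ is supported in an $O(h)$ tubular neighborhood of a smooth surface $\Gamma$ with $\|g\|_\infty \le M$, then the periodic mean-zero solution of $\Delta_h \varphi = g$ satisfies $\|\varphi\|_\infty \le C M h |\log h|$, and one more logarithm is paid to bound $\nabla_h\varphi$; an analogous bound holds for the discrete Crank--Nicolson operator $(I - (\tau/2)\Delta_h)^{-1}$. Applied to the $O(h)$ near-interface truncation (whose effective total mass is $O(h^2)$ because its support is codimension-one), these estimates upgrade the $O(h)$ local truncation into an $O(h^2|\log h|)$ solution error, and $O(h^2|\log h|^2)$ for its discrete gradient. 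This is the sole source of the logarithmic losses in the final bound.

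I would then close the estimate by induction on $n$, carrying the hypothesis $\|e_u^k\|_\infty + \|\nabla_h e_u^k\|_\infty \le K h^2 |\log h|^2$ for $k \le n$. At the step $n \to n+1$, subtracting (1.8) from its exact-solution version yields a discrete implicit diffusion equation of the form $(I - (\tau/2)\Delta_h)e_u^\np = (I+(\tau/2)\Delta_h)e_u^n + \tau R^n$, where $R^n$ aggregates the advection residual, the pressure-gradient error $\nabla_h e_p^\nhalf$, and the local truncation of parts (i)--(iv). The advection residual is controlled by the induction hypothesis and smoothness of $u$. The pressure error itself is obtained by solving the discrete Poisson equation (1.12) driven by $\nabla_h\cdot$ of the convective error plus the $C_4,C_5$ truncations; one logarithm is absorbed in the Poisson solve and a second in taking its discrete gradient, which accounts for the extra $|\log h|$ in the pressure estimate.

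The hardest step, and the one that fixes the final powers of $|\log h|$, is the tight coupling between $e_u$ and $e_p$: the pressure is extracted by a discrete Poisson solve whose right-hand side involves $\nabla_h\cdot(u\cdot\nabla_h u)$, so $\nabla_h e_p^\nhalf$ inherits one derivative of $e_u^n$ together with the near-$\Gamma$ truncation from $C_4$ and $C_5$. One must verify that, after feeding $\nabla_h e_p^\nhalf$ back into the velocity update, the result is still $O(h^2|\log h|^2)$, and that the $C_7$ and $C_1$ corrections cancel exactly the would-be $O(1)$ jumps at grid points newly crossed by $\Gamma$, so the remaining truncation stays codimension-one concentrated and amenable to the gain-in-regularity bound. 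Once these cancellations are established, a discrete Gr\"onwall argument summed over the $O(1/h)$ time steps, together with the harmless $O(h^2)$ contribution of $m$, closes the induction and yields (1.14) and the stated pressure bound up to the additive constant $p_0^n$.
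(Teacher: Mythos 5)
Your overall skeleton (consistency analysis reducing the near-interface $O(h)$ truncation to codimension-one data, discrete smoothing/gain-of-regularity estimates, induction plus a discrete Gr\"onwall argument) matches the paper's, and your log-counting for the pressure's extra factor is in the right spirit. But there is a genuine gap precisely at the step you flag as hardest and then leave unresolved: the feedback of the pressure gradient into the velocity update. Subtracting the schemes, the pressure-gradient error is $-\nabla_h\Delta_h^{-1}\nabla_h\cdot\, g = -(I-\Ptw)g$ with $g$ the difference of advection terms, so the velocity error equation involves $\Ptw g$. You cannot simply bound this by $\|\Ptw\|\,\|g\|$: first, $\|\Ptw\|=O(|\log h|)$, so a naive application at every time step destroys the induction (the paper explicitly notes that applying the projection directly ``would lose stability''); second, $g$ contains $v\cdot\nabla_h w$, a discrete derivative of the error, which is not controlled by $\|w\|$ at the claimed order. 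The paper's resolution --- absent from your proposal --- is to split $w$ into $y=P_0w$ and $z=(I-P_0)w$ using the \emph{exact} discrete projection $P_0$ built from the wide Laplacian $\Delta_0$, prove that $A=(\Delta_h-\Delta_0)\Delta_h^{-1}$ is bounded uniformly in $h$ (Lemma 2.1, via a Fourier-multiplier criterion), and then exploit $\nabla_h\cdot y=0$ and the fact that $z$ is a discrete gradient to rewrite $P_0g$ and $(I-P_0)g$ in the form $\Phi_0+D_h\Phi_1$ with $\|\Phi_k\|\leq K\|w\|$ and \emph{no} logarithm; the loose derivative $D_h$ is then absorbed by the Crank--Nicolson smoothing $\|D_hS^nR\|\leq K(n\tau)^{-1/2}$, which is summable. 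Without some substitute for this mechanism your Gr\"onwall loop does not close.

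A second, related problem: your induction hypothesis carries $\|\nabla_h e_u^k\|_\infty\leq Kh^2|\log h|^2$. That is stronger than what the method delivers and is not needed. Differentiating the Duhamel sum would require $\|D_h^2S^{n-\ell}R\|$, which behaves like $((n-\ell)\tau)^{-1}$ and is not summable, so the gradient bound at order $h^2|\log h|^2$ will not propagate. The paper only carries $\|y^m\|+\|z^m\|\leq K_Th^2|\log h|^2$ and deduces the crude bound $\|D_hw^m\|\leq 1$, which suffices for the quadratic term $w\cdot\nabla_h w$. You should weaken your hypothesis accordingly, and also correct the bookkeeping of logarithms: in the paper one factor comes from the consistency error (the $O(h^2)$ error in $[p]$ enters $C_5$ as $D_h^2$ of an $O(h^2)$ quantity, costing $|\log h|$ through $\|D_h^2\Delta_h^{-1}\|$), a second from applying $P_0$ to the truncation error in the Duhamel sum, and the third (for the pressure only) from a final application of $\|D_h^2\Delta_h^{-1}\|\leq K|\log h|$.
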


We have assumed periodic boundary conditions for the computational domain
to avoid the serious issue of handling the boundary conditions, in order to focus
on the accuracy near the interface.  The difficult question of
combining solid boundary conditions with the pressure solution has been dealt with
extensively for finite difference methods including projection methods
(e.g. \cite{bcm,pm3,shenover,llp}).
We expect that in principle the two issues are separate,
provided the interface is away from the outer boundary.  The periodic condition is
helpful for the analysis in that various operators commute in this case.

Often a MAC, or staggered, grid is used for velocity and pressure, in the present problem
(\cite{khoo,xuwang2d}) and others.  It allows more natural treatment of computational boundaries.
A second advantage is that the discrete version of the projection on divergence-free vector fields is
an exact projection.  On the other hand, the simplicity of a single grid, as in the present case,
is a desirable advantage.  We expect that a result similar to the present one would hold
using a periodic MAC grid.  A pressure increment scheme, updating the pressure rather than
finding it from (1.12), might be used, as in \cite{khoo,lilai}.  Such a scheme would be different from
this one because of the effect of the discrete projection \cite{abc,guyap} and the present analysis would not apply directly.

For temporal discretization of the diffusion
we have chosen to use the Crank-Nicolson (CN) method since it is the
most familiar second-order accurate method allowing time step $O(h)$ with viscosity and since
it is often used with interface simulations and with the projection method.
CN has an important smoothing property in maximum
norm, proved in \cite{khoo}, related to $A$-stability.  This smoothing is largely responsible for the
result stated above.  However, other methods (called $L$-stable) such as BDF2 have better smoothing
properties.  The result proved here should also be true for these methods and perhaps can be improved.

A disadvantage with the use of a single grid
is that $(\nabla_h\cdot)\nabla_h \neq \Delta_h$.
Instead $(\nabla_h\cdot)\nabla_h  = \Delta_0$, where $\Delta_0$
is the ``wide Laplacian'', a sum of second differences such as
\beq  v_{xx}(x_j) \,\approx\, (4h)^{-2} \left( v_{j-2} - 2v_{j} + v_{j+2}\right) \eeq
Consequently the discrete version $\Ptw = I - \nabla_h(\Delta_h)^{-1}\nabla\cdot\;$
of the projection onto divergence-free
vector fields is only an approximate projection, i.e., $\Ptw^2 \neq \Ptw$ 
(cf. \cite{abc,guyap}).  We will see that $\Ptw$ enters in expressing
the error in $u$.  We will find that the exact discrete projection $P_0$,
defined with $(\Delta_0)^{-1}$ rather than $(\Delta_h)^{-1}$, is useful in our estimates. 

In Sec. 2 we collect facts about difference operators in maximum norm relevant to this work.
We discuss $\Ptw$ and $P_0$.  We give an estimate for a discrete Poisson problem with gain of
regularity.  We state estimates for CN time steps, also with gain in regularity.  We state a
lemma which allows a grid function near the interface to be estimated in a lower norm with a gain
of a factor of $h$.  In Sec. 3 we classify the truncation errors made by the exact solution in satisfying the scheme.  We can allow errors which,
for example, are first differences of quantities $O(h^2)$.
In Sec. 4 we estimate the growth of errors in the velocity using the results of Secs. 2 and 3 and
verify the conclusion above.  Finally in the Appendix we give a criterion for boundedness of a discrete linear operator in maximum norm and show that a certain operator relating $\Ptw$ and $P_0$ is bounded.

Previous analysis of Cartesian grid methods with interfaces has concerned elliptic problems
(\cite{bealay,libook,ghostconv}) and linear diffusion (\cite{smooth}, Sec. 8).
The effect of discrete projections on accuracy with irregular boundaries
was studied in \cite{guyforce}.
The case of boundary conditions
at irregular boundaries in Cartesian grids, rather than interfaces, is quite different
and has a long history;
e.g. see \cite{mm}, Ch. 6.  Analysis of finite difference methods for the Navier-Stokes equations
usually assumes smooth, rather than piecewise smooth, solutions and uses $L^2$ estimates.

We use the letter $K$ for generic constants independent of $h$.  $D$ will be any first 
spatial derivative, and $D_h$ any difference operator, not necessarily centered.  However,
$\nabla_h$ in a gradient or divergence will always mean the centered difference.

\ssection{Operators on periodic grids}
We collect facts about
several difference operators on periodic grid functions that will be used in the
following arguments.  Let $\Omega_h$ be
the set of grid points $x_j = jh$ in $\R^d$ where $j$ is a $d$-tuple of integers with 
$|j_\nu|h \leq L$, $1 \leq \nu \leq d$, and let $\fcns$ be the space of periodic
grid functions on $\Omega_h$.  We always use the maximum norm for such functions:  
For $w \in \fcns$, $\|w\| = \max_{x_j \in \Omega_h} |w(x_j)| $.  Correspondingly,
for a linear operator ${\mathcal L}$ on $\fcns$ or a subspace,
$\|{\mathcal L}\| = \max \|{\mathcal L}w\|$ for
$\|w\| = 1$.
Of course we are primarily interested in how these norms depend on $h$.
It will be helpful that the difference operators and inverses we deal with
commute because of the periodicity.

The Fourier modes
\beq  e_k(x_j) \eq e^{ikjh\pi/L}\,, \qquad k \in \Z^d\,, \quad -L/h < k_\nu \leq L/h \eeq 
form a basis of $\fcns$.  They are eigenfunctions
for $\Delta_h$ and the ``wide Laplacian'' $\Delta_0$,
\beq  \Delta_h e_k = \sigma(kh) e_k\,, \qquad \sigma(kh) = 
       - \frac{4}{h^2}\sum_{\nu=1}^d \sin^2 \frac{kh\pi}{2L}  \eeq 
\beq  \Delta_0 e_k = \sigma_0(kh) e_k\,, \qquad \sigma_0(kh) = 
       - \frac{1}{h^2}\sum_{\nu=1}^d \sin^2 \frac{kh\pi}{L}  \eeq 
Evidently the null space of $\Delta_h$ consists of constant functions, the multiples
of $e_0$; the null space ${\mathcal N}_0$ of $\Delta_0$ has dimension $2^d$ and is spanned by
$e_k$ with each $k_\nu = 0$ or $L/h$.  (Cf. \cite{abc}.)
Each operator is invertible on the subspace
of $\fcns$ spanned
by the remaining modes, which is also the range of the operator.  We will call these
subspaces $X_h$ and $X_0$, respectively.  Note that
$ X_h \eq \{w \in \fcns: \Sigma_j\; w(x_j) = 0 \}   $,
the subspace with mean value zero.
$\Delta_h$ is invertible on $X_h$, and we will
write $(\Delta_h)^{-1}$ on $X_h$.  Similarly we have $(\Delta_0)^{-1}$ on $X_0$.
If $D_h$ is any centered first difference and $w \in \fcns$ then $D_h w \in X_0$ since
$D_h$ is zero on ${\mathcal N}_0$.  Thus $(\Delta_0)^{-1}D_h$ and  $(\Delta_h)^{-1}D_h$ 
are meaningful for any centered $D_h$.

Next we discuss the two discrete versions of the projection on divergence-free vector fields.
The ``exact discrete projection'', again using centered differences $\nabla_h$, is
\beq  P_0 v \eq v - \nabla_h  (\Delta_0)^{-1} \nabla_h\cdot v \eeq 
Since $\nabla_h\cdot\nabla_h = \Delta_0$, $\nabla_h\cdot P_0 = 0$, and it follows
that $(I - P_0)P_0 = 0$, or $P_0^2 = P_0$ and $(I - P_0)^2 = (I - P_0)$; that is,
$P_0$ and $(I-P_0)$ are exact projections on $\fcns$.  

The approximate projection $\Ptw$ uses $\Delta_h$ rather than $\Delta_0$,
\beq   \Ptw v \eq v - \nabla_h  (\Delta_h)^{-1} \nabla_h\cdot v \eeq 
To relate the two, we write
\beq  \Ptw \eq I - \nabla_h\Delta_h^{-1}\nabla_h\cdot
        \eq P_0 + \nabla_h(\Delta_0^{-1} - \Delta_h^{-1})\nabla_h\cdot  \eeq 
and
 \begin{multline}
 \Ptw - P_0 \eq \nabla_h(\Delta_h - \Delta_0)\Delta_h^{-1}\Delta_0^{-1}\nabla_h\cdot
       \eq (\Delta_h - \Delta_0) \Delta_h^{-1} (\nabla_h\Delta_0^{-1}\nabla_h\cdot)
       \eq A(I - P_0) 
\end{multline}
where
$    A \eq (\Delta_h - \Delta_0) \Delta_h^{-1}  $
so that 
\beq  \Ptw \eq  P_0 +  A(I-P_0)\,, \quad P_0\Ptw = P_0\,, \quad (I-P_0)\Ptw = A(I-P_0)  \eeq 
The following lemma, proved in the Appendix, tells us that $A$ is bounded.

\begin{lemma}
The operator $A = (\Delta_h - \Delta_0) \Delta_h^{-1}$ on $X_h$ has
$\|A\| \leq K$, with $K$ independent of $h$.
\end{lemma}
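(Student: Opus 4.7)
The plan is to compute the Fourier symbol of $A$ on $X_h$, verify it is uniformly bounded by $1$, and invoke the Appendix criterion for maximum-norm boundedness of discrete periodic Fourier multipliers.

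Since $\Delta_h$ and $\Delta_0$ share the eigenbasis $\{e_k\}$ with eigenvalues $\sigma(kh)$ and $\sigma_0(kh)$ given in (2.2)--(2.3), $A = I - \Delta_0\Delta_h^{-1}$ acts on $e_k$ by multiplication by $\alpha(kh) = 1 - \sigma_0(kh)/\sigma(kh)$. The trigonometric identity $\sin^2 x = 4\sin^2(x/2)\cos^2(x/2)$ applied coordinatewise to (2.3) yields
\[
\alpha(kh) \,=\, \frac{\sum_{\nu=1}^d \sin^4(k_\nu h\pi/(2L))}{\sum_{\nu=1}^d \sin^2(k_\nu h\pi/(2L))}\,.
\]
The inequality $\sin^4 t \leq \sin^2 t$ yields $0 \leq \alpha(kh) \leq 1$ for every nonzero mode $k$, giving $\|A\|_{L^2} \leq 1$ at once.

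To upgrade to $L^\infty$, I would view $\alpha$ as a function of $\xi \in [-\pi, \pi]^d$ with $\xi_\nu = k_\nu h\pi/L$ and apply the Appendix criterion. The symbol is smooth on $[-\pi,\pi]^d$ away from the origin. Near $\xi = 0$, the numerator vanishes to fourth order while the denominator vanishes to second order, so $\alpha(\xi) = O(|\xi|^2)$ and extends continuously to the origin by $\alpha(0) = 0$. The criterion will then produce a bound on the $\ell^1$ norm of the discrete convolution kernel of $A$ uniformly in $h$, which is exactly $\|A\| \leq K$.

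The main obstacle is verifying the regularity of $\alpha$ at $\xi = 0$ required by the criterion: in dimension $d \geq 2$ the leading behavior of $\alpha$ is $\sum_\nu \xi_\nu^4/(4|\xi|^2)$, a ratio of homogeneous polynomials of degrees $4$ and $2$, which is only $C^{1,1}$ (not $C^2$) at the origin. Nevertheless, direct differentiation of the explicit trigonometric quotient produces derivative estimates of Mikhlin type, $|\partial^\beta_\xi \alpha(\xi)| \leq C_\beta |\xi|^{2-|\beta|}$, to each order $|\beta|$ used by the Appendix criterion, with structural constants $C_\beta$ independent of $h$, so the criterion applies and the lemma follows.
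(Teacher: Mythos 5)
Your proposal is correct and follows essentially the same route as the paper's Appendix proof: compute the eigenvalue ratio $\sum_\nu s_\nu^4/\sum_\nu s_\nu^2$, observe it is bounded and $O(|\xi|^2)$ at the origin, and feed bounded second differences into the multiplier criterion of Lemma A.1 with $s=2$. The only step worth making explicit is that at grid points adjacent to $\xi=0$, where the symbol is not $C^2$, the bound on the second divided difference comes from the size estimate $\alpha=O(|\xi|^2)=O(h^2)$ rather than from the derivative bound --- which is precisely how the paper closes that gap.
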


We have estimates for the inverse Laplacians with gain of two discrete derivatives:

\begin{lemma}
As operators on $X_h$
\beq   \|\Delta_h^{-1}\| \leq K_0\,,\quad \|D_h\Delta_h^{-1}\| \leq K_1\,,\quad
        \|D_h^2 \Delta_h^{-1}\| \leq K_2\ |\log{h}|  \eeq 
where $D_h$ is any first difference operator and $D_h^2$ is the product of any two,
with constants independent of $h$.  The same is
true for $\Delta_0^{-1}$ on $X_0$ provided the operators $D_h$ are centered differences.
\end{lemma}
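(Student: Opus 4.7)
The plan is to represent $\Delta_h^{-1}$ as convolution with a discrete Green's function and reduce each maximum-norm bound to a discrete $\ell^1$ estimate for that kernel or its finite differences. Define $G_h \in X_h$ by $\widehat{G_h}(k) \eq 1/\sigma(kh)$ for $k \neq 0$, so that for $f \in X_h$,
\[ (\Delta_h^{-1} f)(x_j) \eq h^d \sum_i G_h(x_j - x_i) f(x_i) \]
Discrete Young's inequality then gives $\|\Delta_h^{-1}\| \leq h^d \sum_j |G_h(x_j)|$, $\|D_h \Delta_h^{-1}\| \leq h^d \sum_j |D_h G_h(x_j)|$, and $\|D_h^2 \Delta_h^{-1}\| \leq h^d \sum_j |D_h^2 G_h(x_j)|$. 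Thus the three bounds in (2.9) reduce to controlling three explicit discrete $\ell^1_h$ sums.

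Step two is to compare $G_h$ to the continuous periodic Green's function $G$ of $-\Delta$ on $[-L,L]^d$, using the symbol identity $\sigma(kh) \eq -|k\pi/L|^2 + O(h^2 |k|^4)$ on bounded modes. A Fourier decomposition into low and high frequency parts yields the standard remainder estimate
\[ |D_h^\alpha (G_h - G)(x_j)| \,\leq\, K h^2 |x_j|^{-d-|\alpha|}\,, \quad |x_j| \gee h \]
together with $|G_h(0)| \leq K(1+|\log h|)$ in 2D and $|G_h(0)| \leq K$ in 3D. I then split $\Omega_h$ into a near region $\{|x_j| < h\}$, containing $O(1)$ grid points and handled by direct size bounds giving an $O(1)$ contribution, and a far region $\{|x_j| \geq h\}$, in which $G_h$ is replaced by $G$ up to the above remainder.

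In the far region the continuous kernels satisfy integrable bounds: $|G(x)| \leq K(1+|\log|x||)$ in 2D and $K|x|^{-1}$ in 3D; $|DG(x)| \leq K|x|^{1-d}$; both yield $O(1)$ sums and hence the first two bounds of (2.9). The delicate case is $D_h^2 \Delta_h^{-1}$, where $|D^2 G(x)| \leq K|x|^{-d}$ is a Calder\'on--Zygmund kernel, not integrable near the origin. Its far-region contribution gives
\[ h^d \sum_{h \leq |x_j| \leq L} |x_j|^{-d} \,\leq\, K \int_h^L r^{-1}\,dr \eq K |\log h| \]
which is the source of the logarithmic loss. The main obstacle is verifying the remainder bound $|D_h^2 (G_h - G)(x_j)| \leq K h^2 |x_j|^{-d-2}$ uniformly for $|x_j| \geq h$, which ensures that $|D_h^2 G_h|$ genuinely inherits the $|x_j|^{-d}$ profile of $|D^2 G|$; this Fourier multiplier analysis of $\sigma(kh)^{-1}$ and its symbol-level differences is the technical heart of the argument. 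For $\Delta_0^{-1}$ on $X_0$ the same plan applies: the extra zeros of $\sigma_0(kh)$ at the Brillouin-zone corners $k_\nu \in \{0, L/h\}$ are harmless because centered differences $D_h$ have symbols that vanish on precisely those frequencies, so $D_h \Delta_0^{-1}$ and $D_h^2 \Delta_0^{-1}$ remain well-defined bounded multipliers obeying the analogous kernel estimates.
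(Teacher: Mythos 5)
Your plan is sound, but it is worth saying up front that the paper does not actually prove this lemma: it cites \cite{smooth}, Cor.~3.2 for the $\Delta_h$ estimates (where they are obtained from smoothing bounds on the discrete heat semigroup, integrating $\|D_h^2 e^{t\Delta_h}\|\leq Kt^{-1}$ from $t\sim h^2$ to $t\sim 1$ to produce the $|\log h|$), and notes the equivalent kernel-based form in \cite{bealay}. Your route --- write $\Delta_h^{-1}$ as convolution with a discrete Green's function, reduce to $\ell^1_h$ bounds via discrete Young, and compare $G_h$ to the continuous periodic Green's function with a remainder $O(h^2|x|^{-d-|\alpha|})$ --- is essentially the \cite{bealay} argument, and the bookkeeping you do is correct: the remainder contributes $h^2\int_h^L r^{-3}r^{d-1}r^{-(d-2)}\,dr=O(1)$ at second-difference level, the near region contributes $h^d\cdot O(h^{-d})=O(1)$, and the log comes only from the non-integrable Calder\'on--Zygmund profile $|D^2G|\sim|x|^{-d}$. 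You correctly identify the unproved step (the discrete-vs-continuous kernel asymptotics with differences) as the technical heart; that is exactly the content of the cited references, so as a plan this is complete in outline. One immaterial slip: with your normalization $(\Delta_h^{-1}f)(x_j)=h^d\sum_i G_h(x_j-x_i)f(x_i)$ one has $G_h(0)=O(h^{-1})$ in 3D, not $O(1)$; the near-region contribution is still $O(h^2)$.

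The one place where your argument is genuinely thinner than the paper's is $\Delta_0^{-1}$. Saying the corner zeros are ``harmless because centered differences vanish on precisely those frequencies'' only establishes that $D_h\Delta_0^{-1}$ is well defined on $X_0$; boundedness is the issue, and near each of the $2^d$ corners the symbol of $D_h\Delta_0^{-1}$ is a first-order zero over a second-order zero --- the same $|\xi-\xi_c|^{-1}$ singularity you must analyze at the origin --- so the entire kernel analysis has to be repeated at all $2^d$ degenerate frequencies. The paper's route is cleaner and worth adopting: $\Delta_0$ decouples $\Omega_h$ into $2^d$ independent subgrids of spacing $2h$, on each of which it is exactly the standard five-point Laplacian at spacing $2h$, and a centered $h$-difference becomes a one-sided $2h$-difference mapping one subgrid to another; the $\Delta_0$ estimates then follow verbatim from the $\Delta_h$ case (which allows arbitrary one-sided differences), and this also explains why the lemma restricts to centered differences for $\Delta_0$ --- a restriction your multiplier argument does not visibly use.
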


The statement for $\Delta_h$ is proved in \cite{smooth}, Cor. 3.2
and in an equivalent form in \cite{bealay}
The case of $\Delta_0$ follows by applying the first case to subgrids with size $2h$.  The
$|\log {h}|$ factor cannot be improved; this can be seen by example.  We will use the
fact that $\nabla_h \Delta_0^{-1}$ is bounded.
This elliptic estimate applies directly to the projections $\Ptw$ and $P_0$, since we can write
\beq  (I - \Ptw)v = \nabla_h(\Delta_h)^{-1}\nabla_h\cdot v =
      \nabla_h(\nabla_h\cdot \Delta_h^{-1})(v - \langle v \rangle)  \eeq 
where $\langle v \rangle$ is the average of $v$, and similarly for $P_0$.
Then from the lemma we have
\beq  \|\Ptw\| \leq K|\log{h}| \,, \qquad  \|P_0\| \leq K|\log{h}|  \eeq 

We will use estimates for the resolvent $R$ of $\Delta_h$ and the $n$th power $S^n$
of the time-stepping operator $S$ for
Crank-Nicolson.  We define
\beq  R = (I - \tfrac{\tau}{2}\Delta_h)^{-1} \,, \quad
S = (I + \tfrac{\tau}{2}\Delta_h)(I - \tfrac{\tau}{2}\Delta_h)^{-1} \eeq 
The following is proved in \cite{smooth}; see (4.12), (4.13), (7.1), (7.2).

\begin{lemma}
As operators on $\fcns$,
\beq  \|R\| \leq K_1\,, \quad \|D_h R\| \leq K_2\tau^{-1/2}\,, \eeq 
\beq  \|S^n\| \leq K_3\,, \quad \|D_h S^n R\| \leq K_4 (n\tau)^{-1/2} \eeq 
where $D_h$ is any first difference and the constants are independent of $h$ and $\tau$.
\end{lemma}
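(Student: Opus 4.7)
The plan is to work in the Fourier basis (2.1), where $\Delta_h$, $R$ and $S$ are all multipliers with real symbols and $\sigma(kh) \leq 0$, so the operators in (2.12)--(2.13) are translation-invariant convolutions on the periodic grid. Their maximum-norm operator norms then equal the $\ell^1$ norms of their convolution kernels, and the task reduces to kernel estimates. I would treat the resolvent $R$ first and then bootstrap to the Crank-Nicolson operator $S^n$.

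For (2.12) I would exploit the integral representation
\beq R \eq \int_0^\infty e^{-s}\, \exp\!\bigl((s\tau/2)\Delta_h\bigr)\, ds, \eeq
valid because $-(\tau/2)\Delta_h$ is bounded and nonnegative on $\fcns$. The matrix exponential $e^{t\Delta_h}$ is a discrete heat semigroup; its kernel is nonnegative with grid sum equal to $1$ (since its Fourier symbol $e^{t\sigma(kh)}$ is real, equals $1$ at the zero mode, and is dominated by it), so $\|e^{t\Delta_h}\| \leq 1$ in maximum norm. A standard kernel estimate, obtained by Fourier inversion of the Gaussian-like symbol in the low-frequency regime and by pointwise exponential decay of $e^{t\sigma}$ in the high-frequency regime, yields the smoothing bound $\|D_h e^{t\Delta_h}\| \leq K t^{-1/2}$ uniformly in $h$. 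Inserting these two bounds into the integral and performing the elementary $s$-integrals gives $\|R\| \leq 1$ and $\|D_h R\| \leq K\tau^{-1/2}$.

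The Crank-Nicolson bounds (2.13) are the main obstacle, because the symbol $\mu(a) = (1-a)/(1+a)$ of $S$, with $a = -(\tau/2)\sigma(kh) \geq 0$, approaches $-1$ at high frequencies where $a \gg 1$. Hence the kernel of $S^n$ is not nonnegative, and a naive pointwise bound would grow with $n$. The approach is a low/high frequency split: on the region $a \leq 1$, $\mu(a)^n$ is close to $e^{-2na}$ and therefore mimics the discrete heat semigroup at time $n\tau$, yielding a kernel with $\ell^1$ norm uniformly bounded in $n$; on the region $a \geq 1$, $|\mu(a)|^n \leq 1$, but one needs smoothness in $\xi = kh$ to convert this sup bound into an $\ell^1$ kernel bound. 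That is achieved by verifying that $\mu^n$ together with its first two $\xi$-derivatives are bounded uniformly in $n$ (the derivatives acquire factors of $n$ but are compensated by powers of $\mu - 1 = -2a/(1+a)$), after which discrete Fourier inversion gives $\|S^n\| \leq K$.

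For the mixed bound $\|D_h S^n R\| \leq K(n\tau)^{-1/2}$, the key observation is that the factor $R$ supplies the high-frequency damping that $S^n$ alone lacks: the symbol $i\xi\,\mu(a)^n/(1+a)$ is controlled by a constant multiple of $(n\tau)^{-1/2}$ after the same low/high frequency analysis, with the extra weight $1/(1+a)$ replacing the worse $\tau^{-1/2}$ of the pure $\|D_h R\|$ estimate by the $n$-dependent rate. The main residual difficulty is the uniform-in-$n$ kernel estimate near the transition $a \sim 1$, where the symbol passes from Gaussian-like to oscillatory behavior; this is the technical core of the argument, carried out in detail in \cite{smooth}.
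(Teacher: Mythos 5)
The paper gives no proof of this lemma at all --- it simply cites \cite{smooth}, (4.12), (4.13), (7.1), (7.2) --- so there is nothing in the paper to compare against line by line. Your outline is a correct sketch of the kind of argument actually used in \cite{smooth}: reduce maximum-norm operator bounds to $\ell^1$ bounds on the convolution kernels, and control those via the symbol and its $\xi$-differences through a Sobolev-type multiplier criterion (exactly the mechanism of Lemma A.1 in the Appendix), with a low/high frequency split for $|\mu(a)|^n$; your Laplace-transform representation of $R$ in terms of the discrete heat semigroup is a clean and legitimate route to (2.12). One small correction: the nonnegativity of the kernel of $e^{t\Delta_h}$ does not follow from the symbol being real, maximal at $k=0$, and bounded by $1$ (that condition is necessary but not sufficient); it follows instead from the sign structure of $\Delta_h$ itself, i.e. $e^{t\Delta_h}=e^{-2dt/h^2}e^{tA/h^2}$ with $A$ the nonnegative neighbor-sum matrix, after which the grid sum equals the symbol at $k=0$. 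With that repaired, and with the acknowledged deferral of the uniform-in-$n$ kernel estimates near $a\sim 1$ to \cite{smooth}, your proposal is consistent with the paper's (cited) proof.
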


We will need to know that a grid function on the {\it irregular} points near the interface
is almost the discrete divergence of a function which is smaller by a factor of $h$.
The following is proved as Lemma 8.1 in \cite{smooth}.  Related statements are
Lemma 2.2 in \cite{hack}, Lemma 2.10 in \cite{steve}, and Lemmas 2.2 and 2.6 in \cite{bealay}.

\begin{lemma}
Let ${\mathcal T}_h = \{t_n = n\tau\,, 0 \leq t_n \leq T \}$ and assume
$\tau/h$ is constant.  Let ${\mathcal I}_h$ be a subset of $\Omega_h\times{\mathcal T}_h$ such that
each $(x_j,t_n) \in {\mathcal I}_h$ is within $O(h)$ of $\Gamma(t_n)$.  Let $\varphi$ be
a periodic function on $\Omega_h\times{\mathcal T}_h$ which is zero outside of ${\mathcal I}_h$.
Then there are periodic grid functions $\Phi_\nu$, $0 \leq \nu \leq d$ so that
\beq  \varphi \eq \Phi_0 + \sum_{\nu=1}^d D_\nu^{-}\Phi_\nu \quad \mbox{and} \quad
     \|\Phi_\nu\| \leq Kh \|\varphi\| \eeq 
with some constant $K$ independent of $h$, where
$D_\nu^{-}$ is the backward difference in direction $\nu$,
and the norms are the maximum over $t_n \leq T$ as well as $jh \in \Omega_h$.
\end{lemma}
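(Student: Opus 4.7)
The approach is to exploit two geometric facts: (i) the support of $\varphi(\cdot,t_n)$ lies in an $O(h)$-tube about the smooth codimension-one set $\Gamma(t_n)$, and (ii) along any grid line whose direction is transverse to $\Gamma(t_n)$, this tube contains only $O(1)$ grid points, with a bound independent of $h$ and $t_n$. A cumulative sum along such a transverse direction then represents $\varphi$ as a backward difference of a function that is smaller by a factor of $h$, with a small correction absorbed into $\Phi_0$ to enforce periodicity.

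First, using smoothness of $\Gamma$ in space and time together with compactness of $[0,T]$, I would cover a fixed tubular neighborhood of $\bigcup_{t\in[0,T]}\Gamma(t)$ by finitely many open sets $U_1,\ldots,U_M$, and fix an assignment $\alpha\mapsto\nu(\alpha)\in\{1,\ldots,d\}$ and a constant $\theta_0>0$ such that, for every $t\in[0,T]$, the unit normal to $\Gamma(t)$ at every point of $\Gamma(t)\cap U_\alpha$ has $e_{\nu(\alpha)}$-component of absolute value at least $\theta_0$. Take a smooth partition of unity $\{\chi_\alpha\}$ subordinate to $\{U_\alpha\}$ that sums to $1$ on a slightly smaller neighborhood of $\Gamma$; for $h$ sufficiently small the hypothesis on $\mathcal{I}_h$ forces $\mathcal{I}_h$ into this smaller neighborhood. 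Setting $\varphi_\alpha=\chi_\alpha\varphi$ gives $\varphi=\sum_\alpha\varphi_\alpha$ with each $\varphi_\alpha$ supported in $(U_\alpha\times\mathcal{T}_h)\cap\mathcal{I}_h$ and $\|\varphi_\alpha\|\leq\|\varphi\|$.

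Next, fix $\alpha$, set $\nu=\nu(\alpha)$, and consider an arbitrary grid line parallel to $e_\nu$ at time $t_n$. Transversality together with the $O(h)$ tube thickness implies that this line meets the support of $\varphi_\alpha(\cdot,t_n)$ in at most $C_0$ grid points, with $C_0$ depending only on $\theta_0$ and uniform $C^2$-bounds for $\Gamma$. Let $s_\alpha(x_j,t_n)$ be the sum of $\varphi_\alpha(\cdot,t_n)$ over the $e_\nu$-line through $x_j$, and define the line-constant function
\[
   \Phi_0^{(\alpha)}(x_j,t_n)=\frac{h}{2L}\,s_\alpha(x_j,t_n).
\]
Then $\varphi_\alpha-\Phi_0^{(\alpha)}$ has vanishing sum on every $e_\nu$-line, so the cumulative sum
\[
   \Phi_\nu^{(\alpha)}(x_j,t_n)=h\sum_{l_\nu\leq j_\nu}\bigl(\varphi_\alpha-\Phi_0^{(\alpha)}\bigr)(x_l,t_n),
\]
taken along each $e_\nu$-line from a fixed reference index, is periodic in $e_\nu$, inherits periodicity in the remaining coordinates from $\varphi_\alpha$, and satisfies $D_\nu^-\Phi_\nu^{(\alpha)}=\varphi_\alpha-\Phi_0^{(\alpha)}$ by construction.

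The bounds are then immediate from $|s_\alpha|\leq C_0\|\varphi\|$: directly $\|\Phi_0^{(\alpha)}\|\leq(C_0 h/2L)\|\varphi\|$, while along each $e_\nu$-line the partial sum of $\varphi_\alpha$ contributes at most $h\cdot C_0\|\varphi\|$ to $\Phi_\nu^{(\alpha)}$ and the partial sum of the constant $\Phi_0^{(\alpha)}$ contributes at most $(2L/h)\cdot h\cdot(C_0 h/2L)\|\varphi\|=C_0 h\|\varphi\|$, so $\|\Phi_\nu^{(\alpha)}\|\leq Kh\|\varphi\|$. Assembling $\Phi_0=\sum_\alpha\Phi_0^{(\alpha)}$ and, for each $\nu$, $\Phi_\nu=\sum_{\alpha:\nu(\alpha)=\nu}\Phi_\nu^{(\alpha)}$ (finite sums with $M$ independent of $h$ and $t_n$) yields $\varphi=\Phi_0+\sum_\nu D_\nu^-\Phi_\nu$ with $\|\Phi_\nu\|\leq Kh\|\varphi\|$ uniformly in $t_n\leq T$. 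The main obstacle is producing the cover, partition of unity, and the per-line support bound $C_0$ uniformly in $t\in[0,T]$; this rests on the smooth time-dependence of $\Gamma$ and compactness of $[0,T]$ and is independent of the discretization.
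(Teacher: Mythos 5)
Your construction is correct and is essentially the argument the paper relies on: the paper does not prove Lemma 2.4 itself but cites Lemma 8.1 of \cite{smooth} (cf.\ Lemmas 2.2 and 2.6 of \cite{bealay}), where the same device is used --- group the irregular points by the coordinate direction most nearly normal to $\Gamma$, observe that grid lines in that direction meet the $O(h)$ tube in $O(1)$ points so the cumulative sum times $h$ gives a $D_\nu^-$-antiderivative of size $O(h)\|\varphi\|$, and subtract a line average ($\Phi_0$) to restore periodicity. One small repair: a single time-independent spatial cover with a fixed direction $\nu(\alpha)$ per patch need not exist if $\Gamma(t)$ moves or rotates appreciably over $[0,T]$; since your decomposition is performed independently at each $t_n$, you should either let the cover depend on $t$ or cover the compact space-time set $\{(x,t):x\in\Gamma(t),\,t\in[0,T]\}$ by product neighborhoods, using smoothness and compactness only to make $\theta_0$, $C_0$, and the number of patches uniform in $t$.
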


Lemma 2.4 applies directly to the correction terms and the remaining truncation errors
near the interface, since they occur only at grid points withing $O(h)$ of $\Gamma$.
If, for example, $\varphi$ is function of $O(h)$
on the irregular points, we express the conclusion briefly as
$\varphi = O(h^2) + D_hO(h^2)$ or $\varphi = (I + D_h)O(h^2)$.
We might combine this with Lemma 2.3 and use the fact that the operators commute to
conclude that $\|S^nR\varphi(\cdot,t_k)\|$ is $O(h^2(n\tau)^{-1/2})$.

\ssection{Consistency Estimates}
We estimate the error the exact solution makes in satisfying the scheme (1.7)--(1.13).
We will denote the exact velocity and pressure as $v$ and $q$ to distinguish them
from the computed quantities $u$ and $p$.  We will verify that the error has the
form $O(h^2|\log h|) + D_h O(h^2|\log h|)$; the second term means
a difference operator applied to
a grid function with the specified bound.

\begin{lemma} For the exact velocity $v$ and pressure $q$ we have for $n \geq 1$
\beq  v^{n+1} - v^n \eq - \tau  (v\cdot\nabla v)^{n+1/2} - \tau \nabla q^{n+1/2} +
  \tau \Delta v^\nhalf + \tau C_1  + \tau C_7  + \tau\eps^n \eeq 
where the quantities on the right are computed as in (1.9)--(1.13) from $v$ at times
$t_\nm, t_n, t_\np$ with correction terms.  For $n = 0$,
\beq  v^1 - v^0 \eq - \tau  (v\cdot\nabla v)^0 - \tau \nabla q^0 +
  \tau \Delta v^{1/2} + \tau C_1  + \tau C_7  + \tau\eps^0 \eeq
Here
$ \eps^n = E_0^n + D_h E_1^n $
with $E_k^n = O(h^2|\log h|)$ for $n \geq 1$, $k = 0,1$, and $\eps^0 = O(h|\log{h}|)$.
\end{lemma}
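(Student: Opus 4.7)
The plan is to substitute the exact solution $v,q$ into the right-hand side of (1.8) (and (1.7) for $n=0$), compute the resulting residual $\eps^n$, and track its contributions by source. I would decompose $\eps^n$ into four families of errors: (i) the temporal truncation of $(v^\np-v^n)/\tau$ viewed as $v_t^\nhalf$; (ii) the temporal truncations of the advection extrapolation (1.9) and of the Crank--Nicolson Laplacian (1.10); (iii) the spatial IIM truncations of $\nabla_h$, $\laph$, and $\nabla_h\cdot\;$ used in (1.9)--(1.11) and (1.13); and (iv) the error arising because the exact $q^\nhalf$ does not satisfy the discrete Poisson problem (1.12) exactly.

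For families (i)--(iii), Taylor expansion at regular points and the midpoint/extrapolation/CN formulas in time each give $O(\tau^2)=O(h^2)$ at points where the interface does not cross the stencil during $[t_n,t_\np]$; the correction terms $\tau C_1$ and $\tau C_7$ are designed precisely to remove the $O(1)$ effect of interface crossings in time, leaving an $O(h^2)$ residual at those points as well. At irregular spatial points the formulas (1.5), (1.6) with the jump-based corrections produce $O(h^2)$ for first differences and $O(h)$ for $\laph$. The $O(h)$ Laplacian residual is supported within $O(h)$ of $\Gamma$, so Lemma 2.4 rewrites it as $\Phi_0 + \sum_\nu D_\nu^- \Phi_\nu$ with $\|\Phi_0\|,\|\Phi_\nu\| = O(h^2)$, i.e.\ already in the form $E_0 + D_h E_1$ with $E_k = O(h^2)$.

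The pressure step is where I expect the main difficulty and where the $|\log h|$ factor enters. Taking the divergence of (1.1) gives the continuous Poisson equation for $q$ with the jump data $[q] = f\cdot n$ and $[\pa q/\pa n]=\nabla_\Gamma\cdot f_{tan}$. Substituting $v,q$ into (1.12), combined with the IIM estimates for $\laph$, $\nabla_h\cdot\;$, $C_4$, $C_5$, shows that $q^\nhalf$ satisfies (1.12) up to a residual $r$ which, by the preceding paragraph, has the form $E_0 + D_h E_1$ with $E_k = O(h^2)$ (the mean-value term $m$ contributes only a harmless constant to the gradient). Hence $q^\nhalf$ and the $p^\nhalf$ produced by (1.12) with $v$ inserted differ, up to a constant, by $\laph^{-1}r$, so
\beq \nabla q^\nhalf \m (\nabla_h p^\nhalf \p C_6) \eq [\nabla q^\nhalf \m \nabla_h q^\nhalf \m C_6] \p \nabla_h \laph^{-1} r. \eeq
The bracketed term is the first-difference IIM residual applied to $q$, again of the form $E_0 + D_h E_1$ with $E_k = O(h^2)$. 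For the remainder, $\nabla_h \laph^{-1} E_0$ is $O(h^2)$ by Lemma 2.2, while $\nabla_h \laph^{-1} D_h E_1$ is a product of two first differences applied to $\laph^{-1} E_1$, which by the logarithmic estimate in Lemma 2.2 is $O(h^2|\log h|)$. Collecting all contributions, $\eps^n = E_0^n + D_h E_1^n$ with $E_k^n = O(h^2|\log h|)$ for $n\ge 1$. For $n=0$ the one-sided choices in (1.7) lose one power of $\tau$ in the temporal truncations, so the dominant piece becomes $O(h)$, and the same pressure argument yields $\eps^0 = O(h|\log h|)$.

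The main obstacle is the pressure bookkeeping: one must verify that every truncation in (1.11)--(1.13), including the mean-value subtraction $m$, assembles so that the Poisson residual $r$ genuinely has the $E_0 + D_h E_1$ decomposition with $E_k = O(h^2)$ \emph{before} the elliptic inverse is applied; otherwise an unhedged $D_h^2 \laph^{-1}$ acting on raw $O(h)$ irregular-point data would leak an extra $O(h|\log h|)$ and spoil both the power of $h$ and the log count.
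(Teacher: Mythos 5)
Your treatment of the exact-force case follows the paper's route: classify residuals as $O(h^2)$ at regular points and $O(h)$ at irregular points, invoke Lemma 2.4 to rewrite the irregular-point part as $(I+D_h)O(h^2)$, and compare the two discrete Poisson problems for the pressure via Lemma 2.2. (One difference of bookkeeping: for the pressure gradient the paper keeps $\nabla_h q^h - \nabla_h q^{ex}$ in the form $D_h O(h^2)$, which is already admissible as $D_h E_1^n$ with $E_1^n=O(h^2)$, so no logarithm is needed at that step; your estimate $\nabla_h\Delta_h^{-1}D_h E_1 = O(h^2|\log h|)$ is correct but wasteful, and it attributes the logarithm in the lemma's statement to the wrong source. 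Also, $m$ is not an additive constant in the pressure; one must check, as the paper does by counting the $O(h^{1-d})$ irregular points, that the average $m$ of $-C_4+C_5$ is itself $O(h^2)$ before it can be dismissed.)

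The genuine gap is that you never account for the errors in the force. Theorem 1.1 assumes only that $f$ and $\nabla_{tan} f$ are known to within $O(h^2)$, so every correction $C_1,\dots,C_7$ in the scheme is built from perturbed jump data: the resulting errors in $[Du]$, $[D^2u]$ are $O(h^2)$, $O(h)$, and those in $[p]$, $[Dp]$, $[D^2p]$ are $O(h^2)$, $O(h^2)$, $O(h)$. Most of these feed in harmlessly (e.g.\ the error in $C_3$ is $O(h^{-2}\cdot h\cdot h^2)=O(h)$ at irregular points, absorbed by Lemma 2.4). But the $O(h^2)$ error in $[p]$ enters $C_5$ through the $h^{-2}$ factor in (1.6) and is naively $O(1)$, which would destroy the lemma. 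The paper's essential observation is that the $[p]$-correction enters $C_5$ as a difference, $D_h b$ with $b=O(h)$ supported at irregular points, so by Lemma 2.4 this piece of $C_5$ has the form $(D_h^2+D_h)O(h^2)$; Lemma 2.2 then converts it into an $O(h^2|\log h|)$ error in the pressure, and this is precisely where the $|\log h|$ in the statement of the lemma comes from. Without this argument your proof does not establish the lemma under the hypotheses actually in force.
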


If $v$ and $q$ were smooth across the interface, the scheme would have $O(h^2)$
truncation error.  To verify the statement, we will consider
the corrections at the irregular points near the interface and the remaining errors.
Since corrections are made to first order accuracy,
a typical truncation error $\eta$ on the whole grid
will have the form 
\beq \eta \eq \left\{ \begin{array}{ll}
           O(h) & \;\;\mbox{at an irregular point} \\
           O(h^2) & \;\;\mbox{at a regular point} 
          \end{array} \right.    \eeq
Because of Lemma 2.4, we then have $\eta = O(h^2) + D_h O(h^2)$.
  
To estimate the errors, we first assume the force $f$ is known
exactly, so that the corresponding jumps in $v$, $q$, and their derivatives are exact.
Later we consider the effect of errors in $f$.  We will use the
superscript $\ex$ to denote exact quantities at time $t_\nhalf$, to distinguish them
from quantities computed in the scheme from the exact $v$.
The error $(v^\np - v^n)/\tau - C_1 - (v_t)^\ex$ has the form (3.3) since $C_1$ corrects for $[u_t]$
if the interface crosses the grid point, leaving a remainder at such a point of $O(\tau) = O(h)$.
In dealing with other terms we will first neglect such crossings and return to them afterward.

The error in the advection term,
\beq  \lilth v^n\cdot\nabla_h v^n - \lilhalf v^\nm\cdot\nabla_h v^\nm + C_2 
               - (v\cdot\nabla v)^\ex = O(h^2) \eeq
uniformly, since at an irregular point the correction $C_2$ uses
$[Dv]$, $[D^2v]$, leaving remainder $O(h^3/h)$.  Similarly, the error
$ \lilhalf(\Delta_h v^n + \Delta_h v^\np) + C_3 - (\Delta v)^\ex $
has the form (3.3) since the remaining error at an irregular grid point after correcting
with $C_3$ is $O(h^3/h^2)$. 

Because of (3.4), the error in the divergence
\beq  \eps_4 = \nabla_h\cdot(v\cdot\nabla_h v)^\nhalf + C_4 -  \nabla\cdot(v\cdot\nabla v)^\ex \eeq
has the form (3.3), with the correction $C_4$ similar to $C_3$. 
For the exact pressure $q^\ex$ we have $\Delta q^\ex = - \nabla\cdot(v\cdot\nabla v)^\ex$, with the prescribed
jumps in $p$ and $\pa p/ \pa n$, so that
\beq  \Delta_h q^\ex = - \nabla\cdot(v\cdot\nabla v)^\ex + C_5 + \eps_5 \eeq
where $C_5$ corrects the discrete Laplacian with
$[p], [Dp], [D^2 p], [u\cdot\nabla u]$, and $\eps_5$ has the form (3.3).
Combining (3.5) and (3.6) we have
\beq  \Delta_h q^\ex = -\nabla_h\cdot(v\cdot\nabla_h v)^\nhalf - C_4 + C_5 + \eps_4 + \eps_5 \eeq
The pressure $q^h$ corresponding to the scheme is the
solution with mean value zero of the similar equation
\beq  \Delta_h q^h = -\nabla_h\cdot(v\cdot\nabla_h v)^\nhalf - C_4 + C_5 - m \eeq
where $m$ is the average of $-C_4 + C_5$ over the periodic box.
We check that $m$ is $O(h^2)$: In (3.7)
the averages of $\Delta_h q^\ex$ and the $\nabla_h\cdot\;$ term
are zero, since they are differences.
Because $\eps_4 + \eps_5$ has the form (3.3), and the number of irregular points is $O(h^{1-d})$,
its average is $O(h^2)$,
and therefore the same is true for $-C_4 + C_5$.
Now the right sides of (3.7),(3.8) differ by an error of the form (3.3), and
it follows from Lemma 2.2 that 
$q^h - (q^\ex - q^\ex_0) = O(h^2)$ uniformly, where $q^\ex_0$ is the mean
value of $q^\ex$ on the grid.  Then
$\nabla_h q^h - \nabla_h q^\ex = D_hO(h^2)$.
Also $\nabla_h q^\ex + C_6 - \nabla q^\ex = O(h)$ at irregular points,
and thus is of the form (3.3). With $\nabla q^\nhalf = \nabla_h q^h + C_6$,
we combine the last two estimates to conclude that
$\nabla q^\nhalf - \nabla q^\ex$ is also of the form $(D_h + I)O(h^2)$.

For each of the terms $(v\cdot\nabla v)^{n+1/2}$, $\nabla q^{n+1/2}$,
$\Delta v^\nhalf$, at a grid point crossed by the interface during the time interval,
there is an additional correction with the jump in the quantity.  The remaining
error in time discretization at such a point is $O(\tau) = O(h)$, and this error
again has the form (3.3). 
% Note to myself:  I really should compare discrete v dot grad v with with time
% extrapolation of exact quantities above.  Then this is strictly correct.

We now consider the effect of errors in the force $f$.  Suppose at first we assume only
that the error in $f$ is $O(h^2)$.  Then the errors in $[Du]$, $]D^2 u]$, determined from (1.2),
are at most $O(h^2)$ and $O(h)$.  Then, for example, the error in $C_3$ is
$O(h^{-2}\cdot h \cdot h^2) = O(h)$, which is again of the form (3.3)
since it occurs only at irregular points.  Similarly the error in correcting
$\Delta_h u$ where the interface crosses is $O(h)$.  The error in $C_1$ or $C_2$ is
$O(h^2)$, and in $C_4$ it is $O(h)$.  These are no larger than corresponding errors
already estimated.

It seems that for the pressure we need the further assumption that the error in $\nabla_{tan} f$
is $O(h^2)$.  The errors in $[p]$, $[Dp]$, $[D^2p]$ are then $O(h^2)$, $O(h^2)$, and $O(h)$,
respectively.  The errors in $[Dp]$, $[D^2p]$ contribute an error to $C_5$ which is $O(h)$.
The error in $[p]$ contributes an error which could be $O(1)$.  However, the form of
$\Delta_h p$ (1.6) is such that the correction from $[p]$ enters $C_5$ as a difference, and 
consequently this error in $C_5$ has the form $D_h b$ for $b = O(h)$.
(The difference structure for this correction was noted in \cite{leelev}.)
This term $b$ at irregular points has the form $(D_h + I)O(h^2)$,
according to Lemma 2.4, and thus the error in
$C_5$ and $\Delta_h p$ is  $(D_h^2 + D_h) O(h^2)$.  By Lemma 2.2 the resulting error
in $p$, and the contribution to $q^h - q^\ex$ above, is $O(h^2|\log h|)$, slightly worse than before.
This error term leads to the log factor in the statement of the lemma.

For $n=0$ the accuracy of the extrapolation in $v\nabla\cdot v + \nabla q$ is only
$O(h)$.  Since $D_hO(h^2) = O(h)$,  $\eps^0$ is at most $O(h|\log{h}|)$.

\ssection{The error in the solution}
We estimate the growth of the error in the velocity.  We set $w^n = u^n - v^n$.
Rather than estimate the maximum of $w$ directly, it seems better to estimate separately
$y^n \equiv P_0 w^n $ and
$z^n \equiv (I-P_0)w^n$.  Of course $\|w^n\| \leq \|y^n\| + \|z^n\|$, but
we cannot bound $\|y^n\|$ by $\|w^n\|$ because of the $\log{h}$ factor in the bound
(2.11) for the projection.
We will see that $P_0$ is useful in estimating the nonlinear terms.

We begin by subtracting the equations (1.8) and (3.1)  for $u^{n+1}$ and $v^{n+1}$.  The corrections
$C_1$, $C_2$, $C_3$, $C_6$, $C_7$ cancel, and the
the advection terms give us
\beq  g \equiv (u\cdot\nabla_h u)^{n+1/2} - (v\cdot\nabla_h v)^{n+1/2}\eeq
where we now use the
superscript $n+1/2$ as a shorthand for the extrapolation in (1.9).  The pressures $p$ and
$q^h$ are defined by the similar equations (1.12), (3.8), so that
$\Delta_h (p - q^h) = -\nabla_h\cdot g$ and the gradient term in
the equation is
\beq   \nabla_h p^\nhalf - \nabla_h q^\nhalf = - \nabla_h\Delta_h^{-1}\nabla_h\cdot g^\nhalf 
            = -(I-\Ptw)g^\nhalf  
\eeq 
We can then combine terms to get
\beq  (u\cdot\nabla_h u)^{n+1/2} - (v\cdot\nabla_h v)^{n+1/2} + \nabla_h p^\nhalf - \nabla_h q^\nhalf
  \eq  \Ptw g^\nhalf \eeq
and thus the equation for $w$ has the simple form
\beq  w^{n+1} - w^n \eq - \tau  \Ptw g^{n+1/2}
  + (\tau/2)(\Delta_h w^{n+1} + \Delta_h w^n) +\tau \eps^n  \eeq
where $\eps^n$ is the error in the $v$-equation (3.1).
We introduce the operators $R$ and $S$ from (2.12)
and rewrite (4.4) as
\beq  w^{n+1} = Sw^n -\tau  R\Ptw g^{n+1/2} + \tau R\eps^n \eeq
For $n = 0$, $w^0 = 0$ and $g^0 =0$ since $u^0 = v^0$, and
\beq  w^1 =  \tau R\eps^0 \eeq

We obtain separate equations for $y = P_0 w$ and $z = (I-P_0) w$
by applying $P_0$ and $(I-P_0)$ through the $w$-equation and
using the identities (2.8) for $P_0\Ptw$ and $(I-P_0)\Ptw$.
We get
\beq  y^{n+1} \eq S y^n - \tau RP_0 g^{n+1/2}
      + \tau P_0 R\eps^n                     \eeq 
and
\beq  z^{n+1} \eq  Sz^n -\tau R A (I-P_0) g^{n+1/2} 
      + \tau (I-P_0)R\eps^n \eeq
and for $n = 0$ 
\beq  y^1 =  \tau P_0 R\eps^0 \,, \qquad z^1 =  \tau (I-P_0)R\eps^0 \eeq 
%  There is another log factor on the truncation error from $P_0$.

To estimate the growth of the error, we define
\beq   \delta^n = \max_{1 \leq m \leq n} \left(\|y^m\| + \|z^m\| \right) \,,\quad  n \geq 1
   \eeq
We will prove by induction that 
\beq   \delta^n \leq K_T h^2(\log{h})^2 \,, \qquad  n\tau \leq T  \eeq
for some constant $K_T$ and for $h$ sufficiently small.  We will then have
verified the estimate (1.14) for the error $w$ in velocity.
Once the estimate is proved for some $n$,
it follows that $\|D_h w^m\| \leq 1$ for $m \leq n$ and for sufficiently small $h$;
we will use this below for the nonlinear term in the error.

To estimate $g^\nhalf$ we write $g = g_1 + g_2 + g_3$ with
\beq   g_1 = v\cdot\nabla_h w\,, \quad 
    g_2 = w\cdot\nabla_h v\,, \quad
    g_3 = w\cdot\nabla_h w   \eeq
It will be important that $D_h v$ is uniformly bounded for any first difference $D_h$
since $v$ is continuous at the interface.  We will use the notation $B(w)$ for any
bounded linear operator applied to $w$; that is, $\|B(w)\| \leq K\|w\|$
with constant $K$ independent of $h$.  Thus, for example, we can write the difference
of a product $v_jw_i$ as
\beq   D_h(v_jw_i) = v_j D_h w_i + B(w)  \eeq
since $D_h v$ is bounded.  Then for $g_1$ and $g_2$ we have
\beq   g_1^\nhalf = D_hB(w^n) + B(w^n) +  D_hB(w^\nm) + B(w^\nm)   \,, 
           \quad g_2^\nhalf = B(w^n) + B(w^\nm) \eeq
For the nonlinear term, we can assume by induction, as remarked above, that
$\|D_hw^n\| = O(1)$, and the same for $\|w^\nm\|$, so that
$\|g_3\| \leq K(\|w^n\| + \|w^\nm\|) $. 

The main difficulty in estimating $y^n$ is the effect of  $P_0$ on $g$,
since $P_0$ has norm $O(|\log{h}|)$.  Applying it directly would lose stability.
Since we have already estimated $g$,
it is equivalent to estimate $P_0g$ or $(I-P_0)g$, and we choose the latter,
$(I-P_0)g = \nabla_h \Delta_0^{-1} \nabla_h\cdot g$.
We note, using Lemma 2.2, that
$\nabla_h\Delta_0^{-1}$ is a bounded operator, since $\nabla_h$ is a centered difference,
and for some terms in $g$ it will be enough to estimate the divergence.

With $g$ as in (4.12), we begin with $g_1$.
Writing $(I-P_0)g_1 = (I-P_0)( v\cdot\nabla_h y +   v\cdot\nabla_h z)$,
we treat the first term by
using $\nabla_h\cdot y = 0$.  We apply $\nabla_h\cdot$ to $v\cdot\nabla_h y$ obtaining
(with sum over $j$, and $D_j$ the centered difference for $x_j$, $1\leq j\leq d$),
\beq  \nabla_h\cdot (v_jD_j y) = D_j\nabla_h\cdot(v_j y) + \nabla_h\cdot B(y)
  = D_j(v_j\nabla_h\cdot y) + (D_j + \nabla_h\cdot)B(y) = 0 + D_hB(y) \eeq
so that by the remark above
\beq  (I-P_0) (v\cdot\nabla_h y) = D_hB(y)  \eeq

For the second term in $g_1$, $z$ is in the range of $(I-P_0)$ and thus is a discrete
gradient, 
so that $D_jz_i = D_iz_j$.  Then the divergence is (with sum over $i$, $j$)
\beq  D_i(v_j D_jz_i) = D_i(v_j D_iz_j)
      = D_i^2(v_jz_j) + D_iB(z) = \Delta_0(v_jz_j) + D_iB(z) \eeq
% since $\sum_i D_i^2 = \Delta_0$.  
Then $(I-P_0)(v\cdot\nabla_h z) = \nabla_h\Delta_0^{-1}\Delta_0(v_jz_j) + D_h B(z)
= \nabla_h (v_jz_j) + D_h B(z) = D_hB(z)$.

The term $(I-P_0)g_2$ has the form $\nabla_h\Delta_0^{-1} \nabla_h\cdot (w\cdot\nabla_h v)
= D_hB(w)$.  For $g_3$, again by induction $\nabla_h w = O(1)$, and we can treat
$(I-P_0)g_3$ like $(I-P_0)g_2$.  In summary we have shown that
\beq  P_0 g^\nhalf = \Phi_0^n + D_h\Phi_1^n + \Phi_0^\nm + D_h\Phi_1^\nm \eeq
where
\beq   \|\Phi_k^m\| \leq K (\|y^m\| + \|z^m\|)\,, \qquad k = 0,1;\;\;m= n-1,n \eeq
and $(I-P_0)g$ has the same form.

We are now ready to prove (4.11) by induction.  
Since $\eps^0 = O(h|\log{h}|)$, it is evident from (4.9) that (4.11) holds for $n=1$.
We assume it is true for $n$ and prove it for $n+1$.  Here and below we use the fact
that $\|P_0\| \leq K|\log{h}|$.
From the $y$-equation (4.7) we have
\beq  y^\np \eq -\tau \sum_{\ell=1}^n S^{n-\ell} RP_0 g^{\ell+1/2} \p 
        \tau \sum_{\ell=0}^n S^{n-\ell} RP_0 \eps^\ell
          \,\equiv\, \Sigma_1 \p \Sigma_2  \eeq
For $\Sigma_1$ we use (4.18),(4.19) and (2.14) to estimate for $\ell \leq n-1$
\beq  |S^{n-\ell} D_h R\Phi_1^\ell| \leq K ((n-\ell)\tau))^{-1/2} \delta^\ell \eeq
and similarly for other terms, while for $\ell = n$ we use (2.13) to get
\beq  |D_h R\Phi_1^n| \leq K \tau^{-1/2} \delta^n \eeq
Then
\beq  |\Sigma_1| \leq K_1 \bigg( \sum_{\ell=1}^\nm ((n-\ell)\tau))^{-1/2} \delta^\ell \tau
         \p \tau^{-1/2} \delta^n \tau \bigg) \eeq
For $\Sigma_2$ we recall from Sec. 3 that $\eps^n = (I + D_h)O(h^2|\log{h}|)$ for $n\geq1$,
and we again use (2.14) for $1 \leq \ell \leq n-1$
and (2.13) for $\ell = n$.  We treat $\ell = 0$ separately using $\eps^0 = O(h|\log{h}|)$
and (2.14).  Then
$|\Sigma_2|$ is bounded by a constant times
\beq  \sum_{\ell=1}^\nm ((n-\ell)\tau))^{-1/2} h^2(\log{h})^2 \tau
         \p \tau^{-1/2} h^2(\log{h})^2  \tau
          \p  h(\log{h})^2  \tau \lee K h^2(\log{h})^2  \eeq
since the sum approximates an integrable function of time.
Estimates for $z^{n+1}$ are entirely similar, since $A$ is bounded independent
of $h$, according to Lemma 2.1, and adding the two
inequalities gives
\beq  \delta^\np \lee K_1  \sum_{\ell=1}^\nm ((n-\ell)\tau))^{-1/2} \delta^\ell \tau
         \p K_1 \tau^{-1/2} \delta^n \tau \p K_2 h^2(\log{h})^2 \eeq
To simplify this we use H\"older's inequality to obtain 
\beq  \left(\delta^\np\right)^3 \lee 
   K_1' \bigg(\sum_{\ell=1}^\nm ((n-\ell)\tau))^{-2/3} \tau + \tau^{-2/3}\tau \bigg)^2
       \sum_{\ell=1}^n (\delta^\ell)^3 \tau \p K_2' (h^2(\log{h})^2)^3  \eeq
The first sum is uniformly bounded, and the inequality
has the form 
\beq  \kappa^\np \lee A \sum_{\ell=1}^n \kappa^\ell \tau + B \eeq
with $\kappa^n = (\delta^n)^3$ and $B = (h^2(\log{h})^2)^3 $.
It follows easily from this and the estimate for $\delta^1$ that
$\delta^m$ has the bound (4.11) for $m \leq \np$, provided $(n+1)\tau \leq T$.

Finally we discuss the accuracy of the pressure.
We can compute the pressure $p^n$ at time $t_n$ as in (1.12)
using $(u\cdot\nabla_h u)^n$ and the corrections $C_4$, $C_5$.
As in Sec. 3, $q^h - (q - q_0) = O(h^2|\log{h}|)$,
where $q$ is the exact pressure, $q^h$ is computed from the exact velocity $v$
and corrections as in (3.8) at time $t_n$,
and $q_0$ is the mean value of $q$ on the grid.
Combining the estimate (4.11) or (1.14)
for $w$ with (4.14) for $g$, we now have
$ g = (D_h + I)O(h^2|\log {h}|^2 )$.  Lemma 2.2 then gives
\beq  p^n - q^h \eq \Delta_h^{-1} \nabla_h\cdot  (D_h + I)O(h^2|\log {h}|^2)
      = O(h^2|\log {h}|^3 )\eeq
Thus  $p^n - (q - q_0) =  O(h^2|\log {h}|^3)$, as stated.

\appendix
\ssection{Appendix}
We prove a simple criterion for the boundedness of Fourier multiplier operators on
periodic grid functions in maximum norm.  We use this statement to prove Lemma 2.1.
Related statements are given in \cite{btw}, Ch.1 and \cite{sulibook}, Sec. 2.5.1. 

We will assume  $L = \pi$ for convenience and $\pi/h$ is an
integer. Let
$I_d$ be the set of integer $d$-tuples $j$ with $|j_\nu| \leq \pi/h$, $1 \leq \nu \leq d$.
Then $\Omega_h = hI_d$ and $\fcns$ is the space of periodic grid functions.  
For $\varphi \in \fcns$ we have the discrete Fourier
transform and inverse
\beq \phihat(k) \eq \sum_{j \in I^d} \varphi(jh)e^{-ikjh} \,, \qquad
      \varphi(jh) \eq (2\pi)^{-d} \sum_{k \in I^d} \phihat(k) e^{ikjh} h^d  \eeq
and the isometry
\beq \sum_{j \in I^d} |\varphi(jh)|^2 \eq 
         (2\pi)^{-d} \sum_{k \in I^d}|\phihat(k)|^2 h^d \,. \eeq

\begin{lemma} 
Suppose an operator $A$ is defined on  $\fcns$ by
\beq (A\varphi){\hat \;} (k) \eq \sig(kh)\phihat(k)  \eeq
where $\sig$ is a function of $\xi \in \R^d$, with period $2\pi$ in
each $\xi_\nu$, $1 \leq \nu \leq d$.
% so that $\sig(kh)$ is periodic in $k_\nu$ with period $\pi/h$. 
Let $\|A\|$ be the norm of $A$ as an operator on $\fcns$ with maximum norm.
Then
\beq \|A\| \lee K \bigg( \sum_{k \in I^d}
     \left(|(D_\nu^+)^s \sig(kh)|^2 \p |\sig(kh)|^2\right) h^d \bigg)^{d/4s}
    \bigg(\sum_{k \in I^d}|\sig(kh)|^2 h^d \bigg)^{(1 - d/2s)/2}
\eeq
where $s$ is an integer, $s > d/2$, $(D_\nu^+)^s$ is the forward divided difference
operator in direction $\nu$, a sum over $1 \leq \nu \leq d$ is implied,
and $K$ is independent of $h$.
\end{lemma}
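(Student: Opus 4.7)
The plan is to represent $A$ as convolution with a kernel $K$ on the grid, bound the operator norm in $L^\infty$ by the $\ell^1$ norm of $K$, and then bound $\|K\|_{\ell^1}$ by a Sobolev-type interpolation between $\|K\|_{\ell^2}$ and a weighted $\ell^2$ norm of $K$ controlled by the $s$-th differences of $\sigma$.

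First I would compute the inverse transform $K(jh) = (2\pi)^{-d}\sum_{k\in I^d}\sigma(kh)\,e^{ikjh}\,h^d$ and verify, using (A.1), that $(A\varphi)(jh)=\sum_{j'\in I^d}K((j-j')h)\,\varphi(j'h)$. Young's inequality for convolution on the periodic grid then gives $\|A\|\le\sum_{j\in I^d}|K(jh)|$, so the task reduces to estimating $\|K\|_{\ell^1}$. The isometry (A.2) yields the basic energy bound
\begin{equation*}
\sum_{j\in I^d}|K(jh)|^2 \,=\, (2\pi)^{-d}\sum_{k\in I^d}|\sigma(kh)|^2 h^d.
\end{equation*}

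Next I would translate the forward difference $D_\nu^+$ in the $k$-variable into multiplication in the $j$-variable. A direct summation by parts, using the periodicity of $\sigma$, shows that the inverse transform of $(D_\nu^+)^s\sigma$ equals $\big((e^{-ij_\nu h}-1)/h\big)^{s}K(jh)$. Since $|(e^{-ij_\nu h}-1)/h|=2|\sin(j_\nu h/2)|/h\ge(2/\pi)|j_\nu|$ whenever $|j_\nu h|\le\pi$, Parseval gives the weighted bound
\begin{equation*}
\sum_{j\in I^d}\bigl(1+|j|^{2s}\bigr)|K(jh)|^2 \,\le\, C\sum_{k\in I^d}\Bigl(|\sigma(kh)|^2+\sum_\nu|(D_\nu^+)^s\sigma(kh)|^2\Bigr)h^d \,\equiv\, C\,M_1,
\end{equation*}
and of course $\|K\|_{\ell^2}^2 \le C\,M_0$ with $M_0=\sum_k|\sigma(kh)|^2 h^d$.

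Finally I would combine these by a standard split-and-optimize argument. For any $R>0$, Cauchy--Schwarz gives
\begin{equation*}
\sum_{|j|<R}|K(jh)| \,\le\, \bigl(\#\{|j|<R\}\bigr)^{1/2}\|K\|_{\ell^2} \,\le\, C R^{d/2} M_0^{1/2},
\end{equation*}
\begin{equation*}
\sum_{|j|\ge R}|K(jh)| \,\le\, \Bigl(\sum_{|j|\ge R}|j|^{-2s}\Bigr)^{1/2}\Bigl(\sum_j|j|^{2s}|K(jh)|^2\Bigr)^{1/2} \,\le\, C R^{(d-2s)/2} M_1^{1/2},
\end{equation*}
where convergence of the second tail sum is precisely where $s>d/2$ is used. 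Choosing $R$ so that $R^{s}=(M_1/M_0)^{1/2}$ balances the two bounds and yields
\begin{equation*}
\|K\|_{\ell^1}\,\le\, C\,M_0^{(2s-d)/(4s)}\,M_1^{d/(4s)},
\end{equation*}
which is exactly the inequality claimed (with the roles of $M_0$ and $M_1$ as in the statement). I would close by noting that when the optimal $R$ would exceed the number of grid points, one simply uses the crude bound $\|K\|_{\ell^1}\le(\#I^d)^{1/2}\|K\|_{\ell^2}$, which is absorbed into the stated inequality because the $|\sigma|^2$ term appears also in the first factor.

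The only delicate step is the summation-by-parts identity under discrete periodicity, where one must check that the boundary terms vanish because both $\sigma$ and $e^{ikjh}$ are periodic on $I^d$. Everything else is standard $L^2$-based Fourier analysis and an interpolation/cutoff estimate.
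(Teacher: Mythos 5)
Your proposal is correct and follows essentially the same route as the paper's proof: represent $A$ as a discrete convolution, bound $\|A\|$ by the $\ell^1$ norm of the kernel, relate $(D_\nu^+)^s\sigma$ to the weight $|j|^{2s}$ on the kernel via summation by parts and the bound $2|\sin(j_\nu h/2)|\geq c|j_\nu h|$, and then split the $\ell^1$ sum at a radius $R$ optimized to balance the two Cauchy--Schwarz estimates (your choice $R^{s}=(M_1/M_0)^{1/2}$ matches the paper's $R=\sqrt{d}+(M_1/M_0)^{1/s}$ after accounting for your $M$'s being the squares of the paper's). The only cosmetic difference is your closing remark about $R$ exceeding the grid size, which the paper handles implicitly by extending $a_j$ by zero to all of $\Z^d$.
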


\begin{proof}
We can write $A$ as a discrete convolution with the inverse transform
$a_j$ of $\sigma(kh)$,
\beq
A\varphi \eq \sum_{\ell \in I^d} a_{j-\ell} \varphi(\ell h) \,, \qquad
a_j \eq (2\pi)^{-d} \sum_{k \in I^d} \sig(kh) e^{ikjh} h^d  \eeq
so that the operator norm of $A$ on $\fcns$  has the bound
\beq \|A\| \lee \sum_{j \in I^d} |a_j| \,. \eeq
We will temporarily extend this sum to all 
$j \in \Z^d$ with $a_j = 0$ for $j \notin I^d$ and estimate as in
the proof of Sobolev's theorem and particularly as in
\cite{btw}, Ch. 1, Thm. 3.1.    With $R$ to be chosen we have
\begin{multline}
    \sum_{j \in \Z^d} |a_j| \eq \sum_{|j| \geq R} |a_j||j|^s|j|^{-s} 
       \p \sum_{|j| < R} |a_j|  \\
          \lee  \bigg(\sum_{|j| \geq R} |j|^{-2s} \bigg)^{1/2}
          \bigg(\sum_{|j| \geq R} |a_j|^2|j|^{2s} \bigg)^{1/2}
       \p  \bigg(\sum_{|j| \leq R} 1 \bigg)^{1/2} 
         \bigg(\sum_{|j| \leq R} |a_j|^2 \bigg)^{1/2}  \\          
   \lee  \left(\int_{|x| \geq R - \sqrt{d}}|x|^{-2s}\,dx\right)^{1/2} M_1
         \p  \left(\int_{|x| \leq R + \sqrt{d}}\,dx\right)^{1/2} M_0 \\
   \lee K_1(R - \sqrt{d})^{(-2s + d)/2} M_1 \p 
             K_0(R + \sqrt{d})^{d/2} M_0   
\end{multline}
where
\beq M_0^2 \eq \sum_{j \in \Z^d} |a_j|^2 \,, \qquad
     M_1^2 \eq \sum_{j \in \Z^d} |a_j|^2(|j|^{2s} + 1) \,. \eeq
We choose $R = \sqrt{d} + (M_1/M_0)^{1/s}$,
so that $R \geq \sqrt{d} + 1$ and  $(R + \sqrt{d})/(R - \sqrt{d})$ is
bounded above.  Then both terms at the end of (A.7) have  $M_1^{d/2s}M_0^{1 - d/2s}$,
and the inequality simplifies to
\beq \sum_{j \in \Z^d} |a_j| \lee  K
            \bigg(\sum_{j \in \Z^d} |a_j|^2(1 + |j|^{2s})\bigg)^{d/4s}
             \bigg(\sum_{j \in \Z^d} |a_j|^2\bigg)^{(1 - d/2s)/2} \,. \eeq

Next we relate $|j|^{s}a_j$ to $(D_\nu^+)^s\sigma(kh)$.
A summation by parts using periodicity shows that
\beq (2\pi)^{-d}\sum_{k \in I^d} (D_\nu^+)^s \sig(kh) e^{ikjh} h^d
     \eq \beta(j_\nu h,h)^s a_j \,, \qquad \beta = (e^{-ij_\nu h} - 1)/h \eeq
so that $D_\nu^+ \sig(kh)$ is the transform of the last term, and thus by (A.2)
\beq \sum_{j \in I^d} |\beta(j_\nu h,h)|^{2s} |a_j|^2 \eq
          (2\pi)^{-d}\sum_{k \in I^d} |(D_\nu^+)^s \sig(kh)|^2 h^d \,.  \eeq
We note that
$ |e^{-ij_\nu h} - 1| = 2|\sin(j_\nu h/2)|\gee c|j_\nu h| $
since $|j_\nu h)| \leq \pi$.  Thus $\sum_j |j_\nu|^{2s} |a_j|^2$ is
bounded by the right side of (A.11), and 
summing over $\nu$ we get
\beq \sum_{j \in I^d} |j|^{2s} |a_j|^2 \lee
           K \sum_{\nu=1}^d \sum_{k \in I^d} |(D_\nu^+)^s \sig(kh)|^2 h^d \,.  \eeq
Finally, combining (A.6),(A.9),(A.12) gives the conclusion (A.4).
\end{proof}

\bigskip

{\it Proof of Lemma 2.1.}  For simplicity we assume $L = \pi$.
From (2.2), (2.3) the Fourier symbol of $A$ in case $d=2$ is
\beq  \sigma(\xi) \eq (s_1^4 + s_2^4)/(s_1^2 + s_2^2)  \,, \qquad s_j = \sin(\xi_j/2)  \eeq
where $\xi = kh$, and similarly for $d=3$.
We set $\sigma(0) = 0$ so that $A$ is extended to all of $\fcns$.
For $|\xi_j)| \leq \pi$, $|s_j| \geq c|\xi_j|$, and $\sigma$ and $\pa\sigma/\pa \xi_j$
are bounded.  It is easy to check
that $\pa^2\sigma/\pa \xi_j^2$ is bounded for $\xi \neq 0$. It follows that
the second difference
$D_j^2 \sigma$ is bounded at grid points not adjacent to $0$.  However, for
grid points near $0$, $\sigma = O(h^2)$, so that $D_j^2 \sigma$ is bounded in that
case also.  Thus the right side of (5.4) is bounded independent of $h$ with $s = 2$, 
and Lemma A.1 ensures that the same is true for $\|A\|$.

\bibliographystyle{amsplain}

\end{document}